\newtheorem{thm}{Theorem}[section]
\theoremstyle{definition}
\theoremstyle{remark}
\numberwithin{equation}{section}
\begin{document}

\title[Constructing Ultrapowers from Elementary Extensions]{Constructing Ultrapowers from Elementary Extensions of Full Clones}
\author{Joseph Van Name}
\address{}
\email{jvanname@mail.usf.edu}

\subjclass[2010]{Primary: 03C20,03H99; Secondary: 08B20,08B99,54E15}
\keywords{Ultrapower, Full Clone}

\begin{abstract}
Let $A$ be an infinite set. Let $\Omega(A)$ be the algebra over $A$ where every constant is a fundamental constant
and every finitary function is a fundamental operation. 
We shall give a method of representing any algebra $\mathcal{L}$ in the variety generated by
$\Omega(A)$ as limit reduced powers and even direct limits of limit reduced powers of $\mathcal{L}$. If
the algebra $\mathcal{L}$ is elementarily equivalent to $\Omega(A)$, then this construction represents
$\Omega(A)$ as a limit ultrapower and also as direct limits of limit ultrapowers of
$\Omega(A)$. This method therefore gives a method of representing Boolean ultrapowers
and other generalizations of the ultrapower construction as limit ultrapowers and direct limits
of limit ultrapowers.
\end{abstract}
\maketitle
\section{Motivation}
For this paper, let $A$ be a fixed infinite set. If $a\in A$, then let $\hat{a}$ be a constant symbol,
and if $f:A^{n}\rightarrow A$, then let $\hat{f}$ be an $n$-ary function symbol.
Let \[\mathcal{F}=\{\hat{f}|f:A^{n}\rightarrow A\,\textrm{for some}\,n\geq 1\}\cup\{\hat{a}|a\in A\}.\] Let $\Omega(A)$ be the algebra of type $\mathcal{F}$ with universe $A$ and where $\hat{a}^{\Omega(A)}=a$ for all $a\in A$ and where
$\hat{f}^{\Omega(A)}=f$ for each function $f$ of finite arity. We shall now study the variety $V(\Omega(A))$ generated by $\Omega(A)$.

It is well known that $V(\Omega(A))=HP_{S}(\Omega(A))=HSP(\Omega(A))$. Therefore every algebra
in $V(\Omega(A))$ is isomorphic to a quotient of a subdirect power of $\Omega(A)$. We shall soon see that
the quotients of the subdirect powers of $\Omega(A)$ are simply the limit reduced powers of $\Omega(A)$.

If $I$ is a set, then we shall write $\Pi(I)$ for the lattice of partitions of $I$. If
$f:I\rightarrow X$ is a function, then we shall write $\Pi(f)$ for the partition
$\{f_{-1}(\{x\})|x\in X\}\setminus\{\emptyset\}$.

\begin{thm}
Let $I$ be a set, and let $\mathcal{B}\subseteq\Omega(A)^{I}$ be a subalgebra.
Then there is a filter $F$ on $\Pi(I)$ such that $f\in B$ if and only if
$\Pi(f)\in F$.
\end{thm}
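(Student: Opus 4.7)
The plan is to define $F := \{\Pi(f) : f \in \mathcal{B}\}$ and verify that $F$ is a filter on the lattice $\Pi(I)$ ordered by refinement, so that the single-block partition $\{I\}$ is the top and the partition into singletons is the bottom. The whole argument will hinge on the richness of the type $\mathcal{F}$: every finitary $A^n \to A$ is a fundamental operation, so $\mathcal{B}$ is closed under componentwise application of any such function, and every $\hat{a}$ is a fundamental constant, so every constant map $I \to A$ lies in $\mathcal{B}$.

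The first step is to observe that membership in $\mathcal{B}$ depends only on the partition induced on $I$: if $\Pi(f_1) = \Pi(f_2)$ and $f_1 \in \mathcal{B}$, then $f_1$ and $f_2$ have the same fibers, so some $h : A \to A$ satisfies $f_2 = h \circ f_1$, whence $f_2 = \hat{h}(f_1) \in \mathcal{B}$. Symmetry gives the converse direction, so once $F$ is shown to be a filter the biconditional $f \in \mathcal{B} \iff \Pi(f) \in F$ is immediate.

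Next I would verify the filter axioms. The top $\{I\}$ lies in $F$ because every constant function is in $\mathcal{B}$. For closure under binary meets, given $f_1, f_2 \in \mathcal{B}$ I will pick an injection $h : A^2 \to A$, which exists because $A$ is infinite; then $\hat{h}(f_1, f_2) \in \mathcal{B}$, and injectivity of $h$ forces the equation $\hat{h}(f_1,f_2)(i) = \hat{h}(f_1,f_2)(j)$ to be equivalent to $f_1(i) = f_1(j) \wedge f_2(i) = f_2(j)$, so $\Pi(\hat{h}(f_1, f_2)) = \Pi(f_1) \wedge \Pi(f_2)$. For upward closure, if $\Pi(f)$ refines $Q \in \Pi(I)$ then $Q$ has at most as many blocks as $\Pi(f)$, hence at most $|A|$ blocks; choosing distinct $a_B \in A$ for each block $B$ of $Q$ and letting $h : A \to A$ send the value of $f$ on $B$ to $a_B$ (with any choice off the range of $f$), I obtain $\hat{h}(f) \in \mathcal{B}$ with $\Pi(\hat{h}(f)) = Q$, so $Q \in F$.

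The argument should have no serious obstacle; the two cardinality checks --- $|A^2| = |A|$ for the meet step, and ``$Q$ has at most $|A|$ blocks'' for upward closure --- are what make the proof go through, and both rely only on $A$ being infinite. The real content is the opening observation that a unary fundamental operation can witness any coarsening of a partition while an injective binary fundamental operation can witness any common refinement; once these are identified the filter axioms fall out almost mechanically.
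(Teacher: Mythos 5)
Your proposal is correct and follows essentially the same route as the paper's own proof: define $F=\{\Pi(f):f\in\mathcal{B}\}$, use an injection $A^{2}\rightarrow A$ for meets, a unary map $A\rightarrow A$ for upward closure, and the observation that functions with the same fiber partition differ by postcomposition with a unary map. Your explicit check that the top partition lies in $F$ (via the constants) is a small completeness point the paper leaves implicit, but it does not change the argument.
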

\begin{proof}
Let $F=\{\Pi(f)|f\in\mathcal{B}\}$. We shall now show that $F$ is a filter.
Let $f,g\in\mathcal{B}$, and let $i:A^{2}\rightarrow A$ be an injective function. Then
$\hat{i}^{\mathcal{B}}(f,g):I\rightarrow A$ is a function with $\hat{i}^{\mathcal{B}}(f,g)\in B$ and
$\Pi(\hat{i}^{\mathcal{B}}(f,g))=\Pi(f)\wedge\Pi(g)$. If $f\in B$ and
$\Pi(f)\preceq P$, then there is a function $L:A\rightarrow A$ such that
$\Pi(\hat{L}^{\mathcal{B}}(f))=\Pi(L\circ f)=P$. 
Therefore $F$ is a filter. 

We now claim that $\mathcal{B}=\{f\in\Omega(A)^{I}|\Pi(f)\in F\}$. If $\Pi(f)\in F$, then
there is a function $g\in B$ with $\Pi(f)=\Pi(g)$. Therefore there is a function
$i:A\rightarrow A$ such that $f=i\circ g=\hat{i}^{\mathcal{B}}(g)$. Therefore $f\in B$.
\end{proof}

In other words, every subalgebra of $\Omega(A)^{I}$ is of the form
$\{f\in\Omega(A)^{I}|\Pi(f)\in F\}$ for some filter $F\subseteq\Pi(I)$. We shall write
$\Omega(A)^{F}$ for the algebra $\{f\in\Omega(A)^{I}|\Pi(f)\in F\}$.
One can easily show that $\{\emptyset\}\cup\bigcup F$ is a Boolean algebra. We shall now give
a one-to-one correspondence between the filters on $\{\emptyset\}\cup\bigcup F$ and the congruences on
$\Omega(A)^{F}$.

If $Z\subseteq\{\emptyset\}\cup\bigcup F$ is a filter, then let $\theta\subseteq\Omega(A)^{F}\times\Omega(A)^{F}$
be the relation where we have $(f,g)\in\theta$ if and only if $\{i\in I|f(i)=g(i)\}\in Z$. One can easily show that
$\theta$ is a congruence on $Z$. We shall let $\Omega(A)^{F}/Z$ denote the quotient algebra
$\Omega(A)^{F}/\theta$, and we shall call $\Omega(A)^{F}/Z$ a limit reduced power of $\Omega(A)$. If
$Z$ is an ultrafilter, then we shall call $\Omega(A)^{F}/Z$ a limit ultrapower of $\Omega(A)$. If $Z$ is a filter
on the set $I$, then we shall write $\Omega(A)^{I}/Z$ for $\Omega(A)^{\Pi(I)}/Z$, and we shall call
$\Omega(A)^{I}/Z$ a reduced power of $A$, and if $Z$ is an ultrafilter, then we shall simply call $\Omega(A)^{I}/Z$ an
ultrapower of $A$. The following theorem shows that every quotient of $\Omega(A)^{F}$ is a limit reduced power of $\Omega(A)$.

\begin{thm}
Let $F\subseteq\Pi(I)$ be a filter. Let $\theta$ be a congruence on $\Omega(A)^{F}$.
Then define $Z\subseteq\{\emptyset\}\cup\bigcup F$ to be the set where we have $R\in Z$ if and only if whenever
$f,g\in\Omega(A)^{F}$ and $f|_{R}=g|_{R}$, we have $(f,g)\in\theta$. Then $Z$ is a filter on $\{\emptyset\}\cup\bigcup F$.
Furthermore, we have $(f,g)\in\theta$ if and only if $\{i\in I|f(i)=g(i)\}\in Z$.
\end{thm}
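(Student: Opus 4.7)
The plan is to handle the two assertions in sequence, leveraging the hypothesis that every finitary function $A^n\to A$ is a fundamental operation so that characteristic-function gadgets and ``if--then--else'' combinators are freely available inside $\Omega(A)^F$.

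To see that $Z$ is a filter on the Boolean algebra $\{\emptyset\}\cup\bigcup F$, I would verify upward-closure and closure under the meet, which here is just intersection. Upward-closure is immediate: if $R\in Z$, $R\subseteq S$, and $f|_S=g|_S$, then $f|_R=g|_R$, so $(f,g)\in\theta$. For closure under intersection, suppose $R,S\in Z$ and $f,g\in\Omega(A)^F$ agree on $R\cap S$; the idea is to construct an auxiliary $h\in\Omega(A)^F$ with $h|_R=f|_R$ and $h|_S=g|_S$, so that $(f,h),(h,g)\in\theta$ and transitivity of $\theta$ yields $(f,g)\in\theta$. Concretely, pick distinct $0,1\in A$ and let $\chi_R\colon I\to A$ take the value $1$ on $R$ and $0$ elsewhere; since $R$ is a block of some $P\in F$, the partition $\{R,I\setminus R\}$ is coarser than $P$ and hence lies in $F$, so $\chi_R\in\Omega(A)^F$. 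Taking $\varphi\colon A^3\to A$ with $\varphi(1,a,b)=a$ and $\varphi(x,a,b)=b$ for $x\neq 1$, the function $h=\hat{\varphi}^{\Omega(A)^F}(\chi_R,f,g)$ equals $f$ on $R$ and $g$ off $R$; the hypothesis $f=g$ on $R\cap S$ then makes the decomposition consistent with $h|_S=g|_S$.

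For the characterization of $\theta$, let $E(f,g):=\{i\in I:f(i)=g(i)\}$; since $E(f,g)$ is a union of blocks of $\Pi(f)\wedge\Pi(g)\in F$, the partition $\{E(f,g),I\setminus E(f,g)\}$ is coarser than $\Pi(f)\wedge\Pi(g)$ and thus belongs to $F$, placing $E(f,g)$ in $\{\emptyset\}\cup\bigcup F$. The implication $E(f,g)\in Z\Rightarrow(f,g)\in\theta$ is immediate from the tautology $f|_{E(f,g)}=g|_{E(f,g)}$ and the definition of $Z$. For the converse, assume $(f,g)\in\theta$ and take arbitrary $f',g'\in\Omega(A)^F$ with $f'|_{E(f,g)}=g'|_{E(f,g)}$; the goal is $(f',g')\in\theta$. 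The key device is the fundamental operation $\psi\colon A^4\to A$ with $\psi(a,b,c,d)=c$ if $a=b$ and $\psi(a,b,c,d)=d$ otherwise. Then $\hat{\psi}^{\Omega(A)^F}(f,f,f',g')=f'$ identically, while $\hat{\psi}^{\Omega(A)^F}(f,g,f',g')$ equals $f'$ on $E(f,g)$ and $g'$ off it, so it reduces to $g'$ everywhere by the assumed agreement of $f'$ and $g'$ on $E(f,g)$. Since $(f,f),(f,g),(f',f'),(g',g')\in\theta$, congruence preservation under $\hat{\psi}$ delivers $f'\,\theta\,g'$.

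I expect the main difficulty to be bookkeeping rather than strategic: one must consistently verify that each auxiliary function ($\chi_R$, $h$, and the $\hat{\psi}$-expressions above) actually lies in $\Omega(A)^F$, which in every case reduces to a routine partition-refinement check using closure of $F$ under meets together with upward-closure. The genuinely clever step is the choice of $\psi$, which collapses what would otherwise be a piecewise gluing into a single application of a fundamental operation, letting the congruence property of $\theta$ do all the work in the hard direction.
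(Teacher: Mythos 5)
Your proof is correct, and on the hard direction it takes a genuinely different (and arguably cleaner) route than the paper. The filter verification is the same in substance: the paper simply asserts the existence of the gluing function $h$ with $h|_{R}=f|_{R}$ and $h|_{S}=g|_{S}$, while you construct it explicitly via $\chi_{R}$ and the if--then--else operation $\varphi$, including the needed check that $\{R,I\setminus R\}$ is coarser than a member of $F$ so that $\chi_{R}\in\Omega(A)^{F}$. The difference is in showing $(f,g)\in\theta$ implies $E(f,g)\in Z$. The paper introduces an auxiliary $h$ realizing the meet partition $\Pi(f)\wedge\Pi(g)\wedge\Pi(f^{\sharp})\wedge\Pi(g^{\sharp})$ and then a binary operation $\alpha$, depending on all four functions, with $\alpha(h(i),f(i))=f^{\sharp}(i)$ and $\alpha(h(i),g(i))=g^{\sharp}(i)$; this requires a (silently omitted) well-definedness check that no two index points force conflicting values of $\alpha$. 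You instead use the single fixed $4$-ary operation $\psi(a,b,c,d)$ returning $c$ when $a=b$ and $d$ otherwise, and compare $\hat{\psi}(f,f,f',g')=f'$ with $\hat{\psi}(f,g,f',g')=g'$; this is uniform in the data, needs no auxiliary $h$ and no consistency argument, and lets compatibility of $\theta$ with a fundamental operation do all the work. Both proofs exploit the fullness of the clone in the same strategic way, but your $\psi$ collapses the paper's two-step construction into one application. You also explicitly verify that $E(f,g)$ lies in $\{\emptyset\}\cup\bigcup F$, a point the paper glosses over.
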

\begin{proof}
We shall first show that $Z$ is a filter. If $R,S\in\{\emptyset\}\cup\bigcup F,R\subseteq S,R\in Z$, then whenever
$f|_{S}=g|_{S}$, we have $f|_{R}=g|_{R}$, so $(f,g)\in\theta$. Therefore $S\in Z$ as well. We conclude that $Z$ is an upper set.
Now assume that $R,S\in Z$. Assume that $f|_{R\cap S}=g|_{R\cap S}$. Then there is a function $h\in\Omega(A)^{F}$ where
$h|_{R}=f|_{R}$ and $h|_{S}=g|_{S}$. Therefore $(h,f)\in\theta,(h,g)\in\theta$, so $(f,g)\in\theta$.
Therefore $Z$ is a filter.

Now assume that $(f,g)\in\theta$. Then let
$R=\{i\in I|f(i)=g(i)\}$. Now let $f^{\sharp},g^{\sharp}$ be functions where $f^{\sharp}|_{R}=g^{\sharp}|_{R}$.
Let $P=\Pi(f)\wedge\Pi(g)\wedge\Pi(f^{\sharp})\wedge\Pi(g^{\sharp})$ and let $h:I\rightarrow A$ be a function such that
$\Pi(h)=P$. One can easily show that there is a function $\alpha:A^{2}\rightarrow A$ such that $\alpha(h(i),f(i))=f^{\sharp}(i)$ for $i\in I$ and
$\alpha(h(i),g(i))=g^{\sharp}(i)$ for $i\in I$. In other words, there is a function $\alpha$ where
$\hat{\alpha}^{\Omega(A)^{F}}(h,f)=f^{\sharp}$ and $\hat{\alpha}^{\Omega(A)^{F}}(h,g)=g^{\sharp}$.
Therefore since $(f,g)\in\theta$, we have $(f^{\sharp},g^{\sharp})\in\theta$ as well. Therefore $R\in Z$.
Similarly, if $\{i\in I|f(i)=g(i)\}\in Z$, then clearly $(f,g)\in\theta$.
We conclude that $(f,g)\in\theta$ if and only if $\{i\in I|f(i)=g(i)\}\in Z$.
\end{proof}

It is now clear that the elements of the variety $V(\Omega(A))$ are simply the algebras isomorphic to the
limit reduced powers of $\Omega(A)$. We also conclude that the lattice of congruences on $\Omega(A)^{F}$ is isomorphic
to the lattice of filters on the Boolean algebra $\{\emptyset\}\cup\bigcup F$. Furthermore, if $\Omega(A)^{F}/Z$ is a limit reduced
power, then the lattice of congruences on $\Omega(A)^{F}/Z$ is isomorphic to the lattice of filters on the
Boolean algebra $(\{\emptyset\}\cup\bigcup F)/Z$.

Let $\mathcal{L}\in V(\Omega(A))$. Then define a function $e:\Omega(A)\rightarrow\mathcal{L}$
by letting $e(a)=\hat{a}^{\mathcal{L}}$ for $a\in A$. One can easily show that
$e$ is the only homomorphisms from $\Omega(A)$ to $\mathcal{L}$. The following theorem shows that
every elementary extension $\mathcal{L}$ of $\Omega(A)$ is isomorphic to a limit ultrapower of
$\Omega(A)$. In the following theorem, one needs to take note that the variety $V(\Omega(A))$ is congruence
permutable(congruence permutable means that $\theta_{1}\circ\theta_{2}=\theta_{2}\circ\theta_{1}$ whenever
$\theta_{1}$ and $\theta_{2}$ are congruences in some algebra $\mathcal{L}\in V(\Omega(A))$). 
Congruence permutability follows from the limit reduced power representation of algebras or from
Mal'Cev's characterization of congruence permutable varieties \cite{B}[Sec. 2.12].

\begin{thm}
Let $\mathcal{L}\in V(\Omega(A))$ be an algebra with more than one element. Then the following are equivalent.

1. $\mathcal{L}$ is simple.

2. $\mathcal{L}$ is subdirectly irreducible.

3. $\mathcal{L}$ is directly indecomposable.

4. The mapping $e:\Omega(A)\rightarrow\mathcal{L}$ is an elementary embedding.

If $\mathcal{L}=\Omega(A)^{F}/Z$ is a limit reduced power of $\Omega(A)$, then
the above four statements are equivalent to the following statement.

5. $Z$ is an ultrafilter on the Boolean algebra $\{\emptyset\}\cup\bigcup F$.
\end{thm}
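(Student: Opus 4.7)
The plan is to fix a representation $\mathcal{L} \cong \Omega(A)^F/Z$ supplied by the two preceding theorems, so that the congruence lattice of $\mathcal{L}$ corresponds to the filter lattice of the Boolean algebra $B = (\{\emptyset\} \cup \bigcup F)/Z$. The equivalences among (1), (2), (3), and (5) will then follow from the structure of that filter lattice together with the congruence permutability of $V(\Omega(A))$, while (4) enters through Keisler's fundamental theorem of limit ultrapowers in one direction and a direct first-order argument in the other.

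For $(1) \Leftrightarrow (5)$: a Boolean algebra has exactly the two filters $\{1\}$ and the whole algebra iff it is the two-element Boolean algebra, so $\mathcal{L}$ is simple iff $B$ is trivial iff $Z$ is maximal. The implications $(1) \Rightarrow (2) \Rightarrow (3)$ are standard. For $(3) \Rightarrow (5)$ I argue the contrapositive: if $Z$ is not an ultrafilter, pick $b \in B$ with $0 < b < 1$ and let $b'$ be its Boolean complement. The principal filters generated by $b$ and $b'$ correspond to congruences $\theta, \theta'$ of $\mathcal{L}$ whose intersection is trivial and whose join is total; congruence permutability then turns them into a pair of factor congruences yielding a nontrivial direct decomposition $\mathcal{L} \cong \mathcal{L}/\theta \times \mathcal{L}/\theta'$.

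For $(5) \Rightarrow (4)$: when $Z$ is an ultrafilter, $\Omega(A)^F/Z$ is by definition a limit ultrapower of $\Omega(A)$, and Keisler's fundamental theorem for limit ultrapowers guarantees that the canonical map $e$ is elementary. For $(4) \Rightarrow (5)$, suppose $Z$ is not an ultrafilter and choose $R \in \bigcup F$ with $R \notin Z$ and $I \setminus R \notin Z$. Fix distinct $a, b \in A$ and distinct $c, d \in A$, let $f : A \to A$ satisfy $f(a) = f(b) = c$ and $f(x) = d$ for every other $x$, and let $\chi_R : I \to A$ take the value $a$ on $R$ and $b$ on $I \setminus R$. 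Since $R$ is a block of some partition in $F$, the coarser partition $\{R, I \setminus R\}$ also lies in $F$, and hence $\chi_R \in \Omega(A)^F$. The sentence $\forall y\, \bigl( \hat{f}(y) = \hat{c} \to (y = \hat{a} \vee y = \hat{b}) \bigr)$ holds in $\Omega(A)$ but fails in $\mathcal{L}$ at $y = [\chi_R]_Z$: one computes $\hat{f}^{\mathcal{L}}([\chi_R]_Z) = e(c)$, while $[\chi_R]_Z$ equals neither $e(a)$ nor $e(b)$ because both $R$ and $I \setminus R$ are outside $Z$; this contradicts elementarity.

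The main obstacle is $(4) \Rightarrow (5)$, which hinges on isolating a first-order definable finite subset of $\Omega(A)$ and producing an element of $\mathcal{L}$ that elementarity would force into it but which cannot belong. The argument above exploits the fact that every finitary operation on $A$ is in the signature, so any finite subset of $A$ is definable by a formula of the form $\hat{f}(y) = \hat{c}$; without this richness of operations the implication would fail.
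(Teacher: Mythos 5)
Your proof is correct and follows the paper's strategy almost exactly: reduce to $\mathcal{L}=\Omega(A)^{F}/Z$, obtain $(1)\Leftrightarrow(5)$ and $(3)\Rightarrow(5)$ from the isomorphism between the congruence lattice of $\mathcal{L}$ and the filter lattice of $(\{\emptyset\}\cup\bigcup F)/Z$ together with congruence permutability, and obtain $(5)\Rightarrow(4)$ from Keisler's theorem on limit ultrapowers. The one place you diverge is in how statement (4) is tied back into the cycle. The paper proves $(4)\Rightarrow(3)$ by contrapositive: in a nontrivial product $\mathcal{L}_{1}\times\mathcal{L}_{2}$ the element $(\hat{a}^{\mathcal{L}_{1}},\hat{b}^{\mathcal{L}_{2}})$ violates the sentence $\forall x(\hat{i}(x)=\hat{a}\vee\hat{i}(x)=\hat{b})$, where $i$ retracts $A$ onto $\{a,b\}$. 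You instead prove $(4)\Rightarrow(5)$ by contrapositive, using the class of the two-valued function $\chi_{R}$ attached to an undecided block $R$ as a counterexample to the analogous sentence $\forall y(\hat{f}(y)=\hat{c}\rightarrow y=\hat{a}\vee y=\hat{b})$. Both arguments exploit the same feature you correctly isolate --- every finite subset of $A$ is definable because the full clone sits in the signature --- and both close the same cycle of implications. Yours has the small advantage of bypassing the product decomposition entirely for this step (you never need factor congruences to refute elementarity), while the paper's version establishes the slightly more portable fact that no nontrivial direct product in this variety is elementarily equivalent to $\Omega(A)$. Your verification that $\chi_{R}\in\Omega(A)^{F}$ is the right point to check and is correct, since $\{R,I\setminus R\}$ is a coarsening of a partition in $F$; note also that the chosen $R$ is automatically a proper nonempty subset of $I$, since $I$ itself belongs to every filter $Z$.
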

\begin{proof}
Since every algebra in $V(\Omega(A))$ is isomorphic to some limit reduced power of $A$, we may assume that
$\mathcal{L}=\Omega(A)^{F}/Z$.

$1\rightarrow 2,2\rightarrow 3$ These directions are trivial.

$5\rightarrow 4$ This is a consequence of Los's theorem for limit ultrapowers \cite{K}[Sec.\,6.4].

$5\rightarrow 1$. If $Z$ is an ultrafilter, then since $\textrm{Con}(\Omega(A)^{F}/Z)$ is isomorphic to the lattice
of congruences on $(\{\emptyset\}\cup\bigcup F)/Z$, there are only $2$ congruences on $\Omega(A)^{F}/Z$.

$4\rightarrow 3$ We shall prove this direction by contrapositive. Assume that $\mathcal{L}=\mathcal{L}_{1}\times\mathcal{L}_{2}$
where $\mathcal{L}_{1}$ and $\mathcal{L}_{2}$ are non-trivial algebras. Let $a,b\in A$ be distinct elements, and let
$i:A\rightarrow A$ be a function with $i''(A)=\{a,b\}$ and where $i(a)=a,i(b)=b$. Then $\Omega(A)$ satisfies the sentence
$\forall x(\hat{i}(x)=\hat{a}\vee\hat{i}(x)=\hat{b})$. However, we have $\hat{i}^{\mathcal{L}}(\hat{a}^{\mathcal{L}_{1}},\hat{b}^{\mathcal{L}_{2}})
=(\hat{i}^{\mathcal{L}_{1}}(\hat{a}^{\mathcal{L}_{1}}),\hat{i}^{\mathcal{L}_{2}}(\hat{b}^{\mathcal{L}_{2}}))=
(\hat{a}^{\mathcal{L}_{1}},\hat{b}^{\mathcal{L}_{2}})$, but $(\hat{a}^{\mathcal{L}_{1}},\hat{b}^{\mathcal{L}_{2}})\neq\hat{a}^{\mathcal{L}}$
and $(\hat{a}^{\mathcal{L}_{1}},\hat{b}^{\mathcal{L}_{2}})\neq\hat{b}^{\mathcal{L}}$. Therefore
$\mathcal{L}\not\models\forall x(\hat{i}(x)=\hat{a}\vee\hat{i}(x)=\hat{b})$. Therefore the mapping $e$ is not an elementary embedding.

$3\rightarrow 5$ If $Z$ is not an ultrafilter on $\{\emptyset\}\cup\bigcup F$, then since the lattice of congruences
on $\Omega(A)^{F}/Z$ is isomorphic to $\textrm{Con}((\{\emptyset\}\cup\bigcup F)/Z)$, there is a pair $\theta_{1},\theta_{2}$
of non-trivial congruences such that $\theta_{1}\cap\theta_{2}=\{(x,x)|x\in X\}$ and $\theta_{1}\vee\theta_{2}=X^{2}$.
Clearly, we have $\theta_{1}\circ\theta_{2}=\theta_{2}\circ\theta_{1}$ since variety $V(\Omega(A)^{F})/Z$ is congruence
permutable. Therefore, we have \[\Omega(A)^{F}/Z\simeq(\Omega(A)^{F}/Z)/\theta_{1}\times(\Omega(A)^{F}/Z)/\theta_{2}\]
by \cite{B}[Sec. 2.7], so $\Omega(A)^{F}/Z$ is not direct indecomposable.
\end{proof}
See \cite{K}[Sec. 6.4] for a similar but more model theoretic proof that every elementary extension of $\Omega(A)$ is a limit ultrapower of
$\Omega(A)$, and see \cite{F} for an algebraic proof of this result. We shall now represent the free algebras in $V(\Omega(A))$
as algebras of the form $\Omega(A)^{F}$. Since every algebra in $V(\Omega(A))$ can easily be represented as a quotient of a
free algebra, one can easily represent any algebra in $V(\Omega(A))$ as a quotient of $\Omega(A)^{F}$, so the algebras in $V(\Omega(A))$
are representable as limit reduced powers and limit ultrapowers of $\Omega(A)$.

If $P$ is a partition of a set $X$, then we shall write $x=y(P)$ if $x$ and $y$ are contained in the same
block of the partition $P$. Let $I$ be a set. If $i_{1},\dots,i_{n}\in I$, then let $\mathcal{P}_{i_{1},\dots,i_{n}}$ be the partition of
$A^{I}$ where $f=g(\mathcal{P}_{i_{1},\dots,i_{n}})$ if and only if
$f(i_{1})=g(i_{1}),\dots,f(i_{n})=g(i_{n})$. Clearly $\{\mathcal{P}_{i_{1},\dots,i_{n}}|n\in\mathbb{N},i_{1},\dots,i_{n}\in I\}$
is a filterbase on $\Pi(A^{I})$. We shall write $\mathcal{P}(A,I)$ for the filter generated by the filterbase
$\{\mathcal{P}_{i_{1},\dots,i_{n}}|n\in\mathbb{N},i_{1},\dots,i_{n}\in I\}$. Let $\mathbf{F}(A,I)=\Omega(A)^{\mathcal{P}(A,I)}$.

We shall now show that $\mathbf{F}(A,I)$ is a free algebra.
Let $\pi_{i}:A^{I}\rightarrow A$ be the projection function where $\pi_{i}(f)=f(i)$ for each $f:I\rightarrow A$.
Clearly $\Pi(\pi_{i})=\mathcal{P}_{i}$ since $\pi_{i}(f)=\pi_{i}(g)$ if and only if $f(i)=g(i)$ if and only if 
$f=g(\mathcal{P}_{i})$. Therefore $\pi_{i}\in\mathbf{F}(A,I)$ for all $i\in I$.

\begin{thm}
The functions $(\pi_{i})_{i\in I}$ freely generate $\mathbf{F}(A,I)$. 
\end{thm}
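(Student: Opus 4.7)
The plan is to establish two things: (i) the family $(\pi_{i})_{i\in I}$ generates $\mathbf{F}(A,I)$, and (ii) every map $\phi:I\to\mathcal{L}$ into an algebra $\mathcal{L}\in V(\Omega(A))$ extends uniquely to a homomorphism $\Phi:\mathbf{F}(A,I)\to\mathcal{L}$. For (i), I would unpack the condition $\Pi(f)\in\mathcal{P}(A,I)$: by definition of the filter generated by the base $\{\mathcal{P}_{i_{1},\dots,i_{n}}\}$, it means $\Pi(f)$ is coarser than some $\mathcal{P}_{i_{1},\dots,i_{n}}$, i.e., whenever $g,g'\in A^{I}$ agree on the coordinates $i_{1},\dots,i_{n}$ one has $f(g)=f(g')$. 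Thus $f$ factors through $g\mapsto(g(i_{1}),\dots,g(i_{n}))$, yielding $\bar{f}:A^{n}\to A$ with $f=\hat{\bar{f}}^{\mathbf{F}(A,I)}(\pi_{i_{1}},\dots,\pi_{i_{n}})$ (the $n=0$ case giving the constants). Hence every element of $\mathbf{F}(A,I)$ is a term in the $\pi_{i}$'s.

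For (ii), I would define $\Phi$ by the rule $\Phi(\hat{\bar{f}}^{\mathbf{F}(A,I)}(\pi_{i_{1}},\dots,\pi_{i_{n}}))=\hat{\bar{f}}^{\mathcal{L}}(\phi(i_{1}),\dots,\phi(i_{n}))$. The central obstacle is well-definedness, as the representation from (i) is highly non-unique. Suppose $\hat{\bar{f}}^{\mathbf{F}(A,I)}(\pi_{i_{1}},\dots,\pi_{i_{n}})=\hat{\bar{g}}^{\mathbf{F}(A,I)}(\pi_{j_{1}},\dots,\pi_{j_{m}})$ as functions $A^{I}\to A$. Evaluating at an arbitrary $g\in A^{I}$, and using that $g$ can be chosen to take any prescribed values on the finite set $\{i_{1},\dots,i_{n},j_{1},\dots,j_{m}\}$, forces the universally quantified identity $\hat{\bar{f}}(x_{i_{1}},\dots,x_{i_{n}})=\hat{\bar{g}}(x_{j_{1}},\dots,x_{j_{m}})$ to hold in $\Omega(A)$. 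Since $\mathcal{L}\in V(\Omega(A))$, the same identity holds in $\mathcal{L}$, and the substitution $x_{k}\mapsto\phi(k)$ then gives equality of the two candidate values of $\Phi$.

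With well-definedness in hand, the remaining checks are short. For any fundamental operation $\hat{h}$ and any $f_{1},\dots,f_{k}\in\mathbf{F}(A,I)$ with representations over a common index list, the composite $\hat{h}^{\mathbf{F}(A,I)}(f_{1},\dots,f_{k})$ inherits a natural representation over the same list, and the same ``transport through $V(\Omega(A))$'' argument shows $\Phi$ respects $\hat{h}$; taking $\bar{f}=\mathrm{id}_{A}$, $n=1$, $i_{1}=i$ in the definition yields $\Phi(\pi_{i})=\phi(i)$. Uniqueness of $\Phi$ is immediate from (i), since two homomorphisms into $\mathcal{L}$ that agree on the generating family $(\pi_{i})_{i\in I}$ must coincide everywhere.
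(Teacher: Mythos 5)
Your proposal is correct and follows essentially the same route as the paper: the heart of both arguments is that evaluating two coinciding term-representations $\hat{\bar{f}}^{\mathbf{F}(A,I)}(\pi_{i_{1}},\dots,\pi_{i_{n}})=\hat{\bar{g}}^{\mathbf{F}(A,I)}(\pi_{j_{1}},\dots,\pi_{j_{m}})$ at arbitrary points of $A^{I}$ forces the corresponding identity to hold in $\Omega(A)$, hence throughout $V(\Omega(A))$. You additionally spell out the generation step (factoring $f$ through finitely many coordinates via the filterbase $\mathcal{P}_{i_{1},\dots,i_{n}}$) and package freeness as the universal property, both of which the paper leaves implicit, but the substance is the same.
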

\begin{proof}
For each $i\in I$, we have $\Pi(\pi_{i})=\mathcal{P}_{i}$. Therefore we have
$\langle\{\pi_{i}|i\in I\}\rangle=\Omega(A)^{\mathcal{P}(A,I)}=\mathbf{F}(A,I)$, so $\{\pi_{i}|i\in I\}$
generates $\mathbf{F}(A,I)$.

We shall now show that $(\pi_{i})_{i\in I}$ freely generates $\mathbf{F}(A,I)$. It suffices to show that
whenever $f:A^{n}\rightarrow A,g:A^{m}\rightarrow A$ and $\hat{f}^{\mathbf{F}(A,I)}(\pi_{i_{1}},\dots,\pi_{i_{n}})=
\hat{g}^{\mathbf{F}(A,I)}(\pi_{j_{1}},\dots,\pi_{j_{m}})$, then the identity
$\hat{f}(x_{i_{1}},\dots,x_{i_{n}})=\hat{g}(x_{j_{1}},\dots,x_{j_{m}})$ holds. If $(a_{i})_{i\in I}\in A^{I}$, then we have
\[f(a_{i_{1}},\dots,a_{i_{n}})=f(\pi_{i_{1}}(a_{i})_{i\in I},\dots,\pi_{i_{n}}(a_{i})_{i\in I})=
\hat{f}^{\mathbf{F}(A,I)}(\pi_{i_{1}},\dots,\pi_{i_{n}})(a_{i})_{i\in I}\]
\[=\hat{g}^{\mathbf{F}(A,I)}(\pi_{j_{1}},\dots,\pi_{j_{n}})(a_{i})_{i\in I}=g(a_{j_{1}},\dots,a_{j_{m}}).\]
Therefore the identity $\hat{f}(x_{i_{1}},\dots,x_{i_{n}})=\hat{g}(x_{j_{1}},\dots,x_{j_{m}})$ holds in
the variety $V(\Omega(A))$.
\end{proof}

If $\alpha:I\rightarrow\mathcal{L}$, then let $\phi_{\alpha}:\mathbf{F}(A,I)\rightarrow\mathcal{L}$
be the unique homomorphism where we have $\phi_{\alpha}(\pi_{i})=\alpha(i)$ for $i\in I$. One can clearly see that
\[\phi_{\alpha}(\hat{f}^{\mathbf{F}(A,I)}(\pi_{i_{1}},\dots,\pi_{i_{n}}))=
\hat{f}^{\mathcal{L}}(\phi_{\alpha}(\pi_{i_{1}}),\dots,\phi_{\alpha}(\pi_{i_{n}}))=
\hat{f}^{\mathcal{L}}(\alpha(i_{1}),\dots,\alpha(i_{n})).\] Let $Z_{\alpha}$ be the filter on
$\{\emptyset\}\cup\bigcup\mathcal{P}(A,I)$ where \[\Omega(A)^{\mathcal{P}(A,I)}/Z_{\alpha}=\Omega(A)^{\mathcal{P}(A,I)}/\ker(\phi_{\alpha})\simeq\langle\alpha''(I)\rangle.\]
Clearly $Z_{\alpha}$ is an ultrafilter if and only if $\langle\alpha''(I)\rangle$ is simple.
If $\mathcal{L}$ is simple, then $Z_{\alpha}$ is always an ultrafilter for each $\alpha:I\rightarrow\mathcal{L}$. Let 
\[\iota_{\alpha}:\Omega(A)^{\mathcal{P}(A,I)}/Z_{\alpha}\rightarrow\langle\alpha''(I)\rangle\]
be the canonical isomorphism. In other words, we have $\iota_{\alpha}([\ell])=\phi_{\alpha}(\ell)$ where $[\ell]$ denotes the equivalence class of
$\ell$. Take note that if $\alpha''(I)$ generates $\mathcal{L}$, then $\iota_{\alpha}$ is an isomorphism from $\Omega(A)^{\mathcal{P}(A,I)}/Z_{\alpha}$ to $\mathcal{L}$. We therefore have a method of representing any algebra in $V(\Omega(A))$ as a limit reduced power of $\Omega(A)$. In particular, if the mapping $e:\Omega(A)\rightarrow\mathcal{L}$ is an elementary embedding, then
we can construct a limit ultrapower of $\Omega(A)$ isomorphic to $\mathcal{L}$.

If $\mathcal{L}$ is finitely generated, then one can easily show that $\mathcal{L}$ is generated by a single element.
Furthermore, if $\alpha:\{1,\dots,n\}\rightarrow\mathcal{L}$ is a function such that $\alpha(1),\dots,\alpha(n)$
generates $\mathcal{L}$, then since $\iota_{\alpha}:\Omega(A)^{A^{n}}/Z_{\alpha}=\mathbf{F}(A,\{1,\dots,n\})/Z_{\alpha}\rightarrow\mathcal{L}$
is an isomorphism, the algebra $\mathcal{L}$ is representable as a reduced power of $\Omega(A)$. In particular, if
$\mathcal{L}$ is simple and finitely generated, then $\mathcal{L}$ is representable as an ultrapower of $\Omega(A)$.
Conversely, if $|I|\leq|A|$, then every reduced power and ultrapower of $\Omega(A)$ of the form $\Omega(A)^{I}/Z$ is
finitely generated.

In the remainder of this paper, we shall discuss a method of representing every algebra $\mathcal{L}\in V(\Omega(A))$ as a direct limit
of limit reduced powers of $\Omega(A)$. By representing algebras $\mathcal{L}$ as direct limits of limit reduced powers of
$\Omega(A)$, one may be able to represent $\mathcal{L}$ as a limit reduced power besides the quotients of
the algebra $\mathbf{F}(A,I)$. Furthermore, one may also represent $\mathcal{L}$ in terms of Boolean reduced powers and
other generalizations of the reduced power and ultrapower constructions.

If $X$ is a set and $F$ is a filter on $\Pi(X)$, then the covers $F$ generate a uniformity
on $X$, so we may shall regard $(X,F)$ as a uniform space. We shall call the partitions
in the filter $F$ uniform partitions. One may refer to \cite{I} for information about uniform spaces, but no prior knowledge of
uniform spaces is necessary to finish reading this paper.

If $f:X\rightarrow Y$ is a function and $P$ is a partition of $Y$, then we shall write $[f]_{-1}(P)$ for the
partition $\{f_{-1}(R)|R\in P\}\setminus\{\emptyset\}$. One can easily show that the following properties hold.

1. $[f]_{-1}(P_{1}\wedge\dots\wedge P_{n})=[f]_{-1}(P_{1})\wedge\dots\wedge[f]_{-1}(P_{n})$, and

2. $[f\circ g]_{-1}(P)=[g]_{-1}[f]_{-1}(P)$.

If $X,Y$ are sets and $F\subseteq\Pi(X),G\subseteq\Pi(Y)$ are filters, then a function $f:X\rightarrow Y$ is said to be uniformly
continuous if whenever $P\in G$, then $[f]_{-1}(P)\in F$. If $G$ is generated by
a filterbase $\mathfrak{G}$, then $f$ is uniformly continuous if and only if whenever $P\in\mathfrak{G}$ we have
$[f]_{-1}(P)\in F$. Clearly the composition of uniformly continuous maps is uniformly continuous.

The sets $A^{I}$ shall always be given the uniformity generated by the filter $\mathcal{P}(A,I)$. Furthermore, the set
$A$ shall always have the uniformity generated by $\Pi(A)$.

\begin{thm}$\label{ProdPartUnif}$
A function $f:A^{I}\rightarrow A^{J}$ is uniformly continuous if and only if for each projection
$\pi_{j}:A^{J}\rightarrow A$, we have $\pi_{j}\circ f$ be uniformly continuous.
\end{thm}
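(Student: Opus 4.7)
The plan is to reduce both directions to the explicit filterbase $\{\mathcal{P}_{j_1,\dots,j_n}\}$ generating $\mathcal{P}(A,J)$, together with the two functorial properties of $[\cdot]_{-1}$ listed before the theorem, and the observation that each basic partition $\mathcal{P}_{j}$ is precisely $[\pi_{j}]_{-1}(\Delta_{A})$, where $\Delta_{A}$ is the partition of $A$ into singletons.

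For the forward direction, I would first verify that every projection $\pi_{j}:A^{J}\to A$ is itself uniformly continuous. Given any $P\in\Pi(A)$, any two functions in $A^{J}$ that agree at coordinate $j$ land in the same block of $P$ under $\pi_j$, so $[\pi_{j}]_{-1}(P)$ is a coarsening of $\mathcal{P}_{j}$. Since $\mathcal{P}_{j}\in\mathcal{P}(A,J)$ and filters are upward closed, $[\pi_{j}]_{-1}(P)\in\mathcal{P}(A,J)$. Uniform continuity of $\pi_{j}\circ f$ then follows from the already-noted fact that compositions of uniformly continuous maps are uniformly continuous.

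For the reverse direction, assume each $\pi_{j}\circ f$ is uniformly continuous. To show $f$ is uniformly continuous, it suffices by the filterbase criterion to check that $[f]_{-1}(\mathcal{P}_{j_{1},\dots,j_{n}})\in\mathcal{P}(A,I)$ for every tuple $j_{1},\dots,j_{n}\in J$. A direct unwinding of the definition gives $\mathcal{P}_{j_{1},\dots,j_{n}}=\mathcal{P}_{j_{1}}\wedge\dots\wedge\mathcal{P}_{j_{n}}$, so by property 1 the computation reduces to showing $[f]_{-1}(\mathcal{P}_{j_{k}})\in\mathcal{P}(A,I)$ for each $k$. But $\mathcal{P}_{j_{k}}=[\pi_{j_{k}}]_{-1}(\Delta_{A})$, and hence by property 2,
\[
[f]_{-1}(\mathcal{P}_{j_{k}})=[f]_{-1}[\pi_{j_{k}}]_{-1}(\Delta_{A})=[\pi_{j_{k}}\circ f]_{-1}(\Delta_{A}),
\]
which lies in $\mathcal{P}(A,I)$ by uniform continuity of $\pi_{j_{k}}\circ f$ (using that $\Delta_{A}\in\Pi(A)$ is a uniform partition of $A$, since the uniformity on $A$ is all of $\Pi(A)$).

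There is no real obstacle here; the proof is essentially bookkeeping. The only place one must be slightly careful is the forward direction, where one could be tempted to argue that $\pi_{j}$ pulls back basic partitions to basic partitions, whereas the correct observation is that it pulls them back to coarsenings of the basic partition $\mathcal{P}_{j}$ and uses the filter property.
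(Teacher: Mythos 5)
Your proof is correct and follows essentially the same route as the paper: both directions reduce to the filterbase $\{\mathcal{P}_{j_{1},\dots,j_{n}}\}$, the identity $\mathcal{P}_{j}=[\pi_{j}]_{-1}(\Delta_{A})$, and the two listed properties of $[\cdot]_{-1}$. The only difference is cosmetic: you explicitly verify that the projections are uniformly continuous (which the paper merely asserts) and you invoke closure of the filter under finite meets rather than writing out the full meet computation in one display.
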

\begin{proof}
$\rightarrow$ The projections $\pi_{j}$ are all uniformly continuous, so the mappings $\pi_{j}\circ f$ are uniformly
continuous as well being the composition of two uniformly continuous functions.

$\leftarrow$ Assume that each $\pi_{j}\circ f$ is uniformly continuous.
If $j_{1},\dots,j_{n}\in J$ are distinct elements, then we have $\mathcal{P}_{j_{1},\dots,j_{n}}=\mathcal{P}_{j_{1}}\wedge\dots\wedge
\mathcal{P}_{j_{n}}$. However, if $P=\{\{a\}|a\in A\}$, then we have \[\mathcal{P}_{j_{1}}=[\pi_{j_{1}}]_{-1}(P),\dots,\mathcal{P}_{j_{n}}=[\pi_{j_{n}}]_{-1}(P).\]
Therefore 
\[[f]_{-1}(\mathcal{P}_{j_{1},\dots,j_{n}})=[f]_{-1}(\mathcal{P}_{j_{1}}\wedge\dots\wedge\mathcal{P}_{j_{n}})
=[f]_{-1}(\mathcal{P}_{j_{1}})\wedge\dots\wedge[f]_{-1}(\mathcal{P}_{j_{n}})\]
\[=[f]_{-1}[\pi_{j_{1}}]_{-1}(P)\wedge\dots\wedge[f]_{-1}[\pi_{j_{n}}]_{-1}(P)
=[\pi_{j_{1}}\circ f]_{-1}(P)\wedge\dots\wedge[\pi_{j_{n}}\circ f]_{-1}(P)\]
is a uniform partition since each $\pi_{j}\circ f$ is uniformly continuous.
\end{proof}
Let $f\in\mathbf{F}(A,I)$, and let $\mathcal{L}\in V(\Omega(A))$. Then let
$\overline{f}^{\mathcal{L}}:\mathcal{L}^{I}\rightarrow\mathcal{L}$ be the mapping defined as follows.
If $f=\hat{g}^{\mathbf{F}(A,I)}(\pi_{i_{1}},\dots,\pi_{i_{n}})$, then let
$\overline{f}^{\mathcal{L}}((\ell_{i})_{i\in I})=\hat{g}^{\mathcal{L}}(\ell_{i_{1}},\dots,\ell_{i_{n}})$.
We now show that $\overline{f}^{\mathcal{L}}$ is well defined. Assume that
$f=\hat{g}^{\mathbf{F}(A,I)}(\pi_{i_{1}},\dots,\pi_{i_{n}})=\hat{h}^{\mathbf{F}(A,I)}(\pi_{j_{1}},\dots,\pi_{j_{m}})$.
Then since $(\pi_{i})_{i\in I}$ freely generates $\mathbf{F}(A,I)$, the identity
$\hat{f}(x_{i_{1}},\dots,x_{i_{n}})=\hat{g}(x_{j_{1}},\dots,x_{j_{n}})$ holds in the variety $V(\Omega(A))$, so
$\hat{g}^{\mathcal{L}}(\ell_{i_{1}},\dots,\ell_{i_{n}})=\hat{h}^{\mathcal{L}}(\ell_{j_{1}},\dots,\ell_{j_{m}})$.
Therefore, the mapping $\overline{f}^{\mathcal{L}}$ is well defined.
If $f:A^{I}\rightarrow A^{J}$ is uniformly continuous, then let
$\overline{f}^{\mathcal{L}}:\mathcal{L}^{I}\rightarrow\mathcal{L}^{J}$ be the mapping where
$\overline{f}^{\mathcal{L}}(\alpha)=(\overline{\pi_{j}\circ f}^{\mathcal{L}}(\alpha))_{j\in J}$.

\begin{thm}
Let $f:A^{I}\rightarrow A^{J},g:A^{J}\rightarrow A^{K}$ be uniformly continuous. Then
$\overline{g}^{\mathcal{L}}\circ\overline{f}^{\mathcal{L}}=\overline{g\circ f}^{\mathcal{L}}$.
\end{thm}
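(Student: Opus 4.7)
The plan is to reduce the equation to its scalar components, and then to identify $\overline{(\cdot)}^{\mathcal{L}}(\alpha)$ with the homomorphism $\phi_\alpha: \mathbf{F}(A,I) \to \mathcal{L}$ so that the claim follows from a term-substitution identity inside $\mathbf{F}(A,I)$. Unwinding the definitions of $\overline{f}^{\mathcal{L}}$, $\overline{g}^{\mathcal{L}}$, and $\overline{g\circ f}^{\mathcal{L}}$, the conclusion $\overline{g}^{\mathcal{L}}\circ\overline{f}^{\mathcal{L}}=\overline{g\circ f}^{\mathcal{L}}$ is equivalent to the family of componentwise equations: for each $k\in K$ and each $\alpha\in\mathcal{L}^I$,
\[
\overline{\pi_k\circ g}^{\mathcal{L}}\bigl((\overline{\pi_j\circ f}^{\mathcal{L}}(\alpha))_{j\in J}\bigr)
= \overline{\pi_k\circ g\circ f}^{\mathcal{L}}(\alpha).
\]

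To establish a single such equation, I would use the uniform continuity of $\pi_k\circ g:A^J\to A$ to pick a representation
$\pi_k\circ g = \hat{g}_0^{\mathbf{F}(A,J)}(\pi_{j_1},\ldots,\pi_{j_n})$
with $g_0:A^n\to A$ and $j_1,\ldots,j_n\in J$. A direct pointwise check at an arbitrary $y\in A^I$ then yields the term-substitution identity
\[
\pi_k\circ g\circ f = \hat{g}_0^{\mathbf{F}(A,I)}(\pi_{j_1}\circ f,\ldots,\pi_{j_n}\circ f),
\]
where each $\pi_{j_i}\circ f$ lies in $\mathbf{F}(A,I)$ by Theorem~\ref{ProdPartUnif}. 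Both sides evaluated at $y$ produce $g_0(f(y)(j_1),\ldots,f(y)(j_n))$, so the equality is immediate.

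The key observation is that for a fixed $\alpha\in\mathcal{L}^I$, the map $u\mapsto\overline{u}^{\mathcal{L}}(\alpha)$ on $\mathbf{F}(A,I)$ agrees with $\phi_\alpha$ on the free generators $\pi_i$ (both send $\pi_i$ to $\alpha(i)$), and hence coincides with the homomorphism $\phi_\alpha$ on all of $\mathbf{F}(A,I)$ by the universal property of the free algebra. Applying $\phi_\alpha$ to the displayed term-substitution identity gives
\[
\overline{\pi_k\circ g\circ f}^{\mathcal{L}}(\alpha) = \hat{g}_0^{\mathcal{L}}\bigl(\overline{\pi_{j_1}\circ f}^{\mathcal{L}}(\alpha),\ldots,\overline{\pi_{j_n}\circ f}^{\mathcal{L}}(\alpha)\bigr),
\]
and the right-hand side is precisely $\overline{\pi_k\circ g}^{\mathcal{L}}\bigl((\overline{\pi_j\circ f}^{\mathcal{L}}(\alpha))_{j\in J}\bigr)$ by the defining formula for $\overline{\pi_k\circ g}^{\mathcal{L}}$ evaluated at the tuple indexed by $J$. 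The only real obstacle is bookkeeping around the well-definedness of $\overline{u}^{\mathcal{L}}$, which depends a priori on a chosen representation of $u$; recasting this map intrinsically as the homomorphism $\phi_\alpha$ makes compatibility with term substitution automatic.
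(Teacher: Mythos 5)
Your proof is correct, and it shares the paper's skeleton: reduce to the $k$-th coordinate and represent $\pi_{k}\circ g$ as $\hat{g_{0}}^{\mathbf{F}(A,J)}(\pi_{j_{1}},\dots,\pi_{j_{n}})$. From there, though, you take a cleaner route. The paper expands a second time, writing each $\pi_{j_{i}}\circ f$ as $\hat{s_{i}}^{\mathbf{F}(A,I)}(\pi_{i_{1}},\dots,\pi_{i_{m}})$ over a common finite index set, explicitly constructs the composite function $t$, and verifies pointwise both that $\pi_{k}\circ g\circ f=\hat{t}^{\mathbf{F}(A,I)}(\pi_{i_{1}},\dots,\pi_{i_{m}})$ and that the corresponding computation goes through on the $\mathcal{L}$ side. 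You instead stop at the single substitution identity $\pi_{k}\circ g\circ f=\hat{g_{0}}^{\mathbf{F}(A,I)}(\pi_{j_{1}}\circ f,\dots,\pi_{j_{n}}\circ f)$ (which is indeed immediate pointwise, and each $\pi_{j_{i}}\circ f$ does lie in $\mathbf{F}(A,I)$ by uniform continuity) and transport it to $\mathcal{L}$ via the identification $\overline{u}^{\mathcal{L}}(\alpha)=\phi_{\alpha}(u)$, letting the homomorphism property of $\phi_{\alpha}$ absorb the bookkeeping; this buys you freedom from the double expansion and the auxiliary term $t$. One small caveat: you justify $\overline{u}^{\mathcal{L}}(\alpha)=\phi_{\alpha}(u)$ by the universal property of the free algebra, but that argument presupposes that $u\mapsto\overline{u}^{\mathcal{L}}(\alpha)$ is itself a homomorphism, which you have not checked. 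It is easier, and is essentially what the paper records right after defining $\phi_{\alpha}$, to observe that both maps are computed by the identical formula $\hat{g}^{\mathcal{L}}(\alpha(i_{1}),\dots,\alpha(i_{n}))$ from any representation $u=\hat{g}^{\mathbf{F}(A,I)}(\pi_{i_{1}},\dots,\pi_{i_{n}})$, so they coincide outright; this is a rewording, not a gap.
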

\begin{proof}
Assume $k\in K$. Then $\pi_{k}\circ g:A^{J}\rightarrow A$ is uniformly continuous. Therefore there
are $j_{1},\dots,j_{n}\in J$ and some $r:A^{n}\rightarrow A$ such that
$\pi_{k}\circ g=\hat{r}^{\mathbf{F}(A,J)}(\pi_{j_{1}},\dots,\pi_{j_{n}})$. Furthermore, since
$\pi_{j_{1}}\circ f,\dots,\pi_{j_{n}}\circ f:A^{I}\rightarrow A$ are uniformly continuous, there are indices
$i_{1},\dots,i_{m}\in I$ and mappings $s_{1},\dots,s_{n}:A^{m}\rightarrow A$ such that
\[\pi_{j_{1}}\circ f=\hat{s_{1}}^{\mathbf{F}(A,I)}(\pi_{i_{1}},\dots,\pi_{i_{m}}),\dots,
\pi_{j_{n}}\circ f=\hat{s_{n}}^{\mathbf{F}(A,I)}(\pi_{i_{1}},\dots,\pi_{i_{m}}).\]
Now let $t:A^{m}\rightarrow A$ be the mapping where
\[t(a_{1},\dots,a_{m})=r(s_{1}(a_{i_{1}},\dots,a_{i_{m}}),\dots,s_{n}(a_{i_{1}},\dots,a_{i_{m}})).\]
Then we have \[\pi_{k}\circ g\circ f(a_{i})_{i\in I}=\hat{r}^{\mathbf{F}(A,I)}(\pi_{j_{1}},\dots,\pi_{j_{n}})(f(a_{i})_{i\in I})
=r(\pi_{j_{1}}\circ f(a_{i})_{i\in I},\dots,\pi_{j_{n}}\circ f(a_{i})_{i\in I})\]
\[=r(\hat{s_{1}}^{\mathbf{F}(A,I)}(\pi_{i_{1}},\dots,\pi_{i_{m}})(a_{i})_{i\in I},\dots,
\hat{s_{n}}^{\mathbf{F}(A,I)}(\pi_{i_{1}},\dots,\pi_{i_{m}})(a_{i})_{i\in I})\]
\[=r(s_{1}(a_{i_{1}},\dots,a_{i_{m}}),\dots,s_{n}(a_{i_{1}},\dots,a_{i_{m}}))
=t(a_{i_{1}},\dots,a_{i_{m}})=\hat{t}^{\mathbf{F}(A,I)}(\pi_{i_{1}},\dots,\pi_{i_{m}})(a_{i})_{i\in I},\]
so $\pi_{k}\circ g\circ f=\hat{t}^{\mathbf{F}(A,I)}(\pi_{i_{1}},\dots,\pi_{i_{m}})$.

We also have \[\pi_{k}\circ\overline{g}^{\mathcal{L}}\circ\overline{f}^{\mathcal{L}}(\ell_{i})_{i\in I}
=\pi_{k}\circ\overline{g}^{\mathcal{L}}(\overline{\pi_{j}\circ f}^{\mathcal{L}}(\ell_{i})_{i\in I})_{j\in J}\]
\[=\overline{\pi_{k}\circ g}^{\mathcal{L}}(\overline{\pi_{j}\circ f}^{\mathcal{L}}(\ell_{i})_{i\in I})_{j\in J}
=\hat{r}^{\mathcal{L}}(\overline{\pi_{j_{1}}\circ f}^{\mathcal{L}}(\ell_{i})_{i\in I},\dots,
\overline{\pi_{j_{n}}\circ f}^{\mathcal{L}}(\ell_{i})_{i\in I})\]
\[=\hat{r}^{\mathcal{L}}(\hat{s_{1}}^{\mathcal{L}}(\ell_{i_{1}},\dots,\ell_{i_{m}}),\dots,\hat{s_{n}}^{\mathcal{L}}(\ell_{i_{1}},\dots,\ell_{i_{m}}))=\hat{t}(\ell_{i_{1}},\dots,\ell_{i_{m}})\]
\[=\overline{\pi_{k}\circ g\circ f}^{\mathcal{L}}(\ell_{i})_{i\in I}=\pi_{k}\circ\overline{g\circ f}^{\mathcal{L}}(\ell_{i})_{i\in I}.\]

 Therefore, we have
$\pi_{k}\circ\overline{g}^{\mathcal{L}}\circ\overline{f}^{\mathcal{L}}=\pi_{k}\circ\overline{g\circ f}^{\mathcal{L}}$
for all $k$. We conclude that $\overline{g}^{\mathcal{L}}\circ\overline{f}^{\mathcal{L}}=\overline{g\circ f}^{\mathcal{L}}$.
\end{proof}
If $f:A^{I}\rightarrow A^{J}$ is uniformly continuous, then define a mapping
$f^{\star}:\mathbf{F}(A,J)\rightarrow\mathbf{F}(A,I)$ by $f^{\star}(g)=g\circ f$ whenever $g\in\mathbf{F}(A,J).$
\begin{thm}
If $f:A^{I}\rightarrow A^{J}$, $\alpha:I\rightarrow\mathcal{L},\beta:J\rightarrow\mathcal{L}$ and
$\beta=\overline{f}^{\mathcal{L}}(\alpha)$, then 

1. $\phi_{\beta}=\phi_{\alpha}f^{\star}$, and

2. Whenever $R\in\{\emptyset\}\cup\bigcup\mathcal{P}(A,J)$, we have $R\in Z_{\beta}$ if and only if
$f_{-1}(R)\in Z_{\alpha}$
\end{thm}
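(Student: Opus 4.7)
My plan is to obtain part 1 from the universal property of the free algebra $\mathbf{F}(A,J)$, and then deduce part 2 from part 1 together with the kernel characterization of $Z$ given in Theorem 1.2.

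For part 1, I first observe that $f^{\star}$ is a homomorphism $\mathbf{F}(A,J)\to\mathbf{F}(A,I)$, because precomposition with a fixed function commutes with the action of each fundamental operation. Consequently both $\phi_{\beta}$ and $\phi_{\alpha}\circ f^{\star}$ are homomorphisms $\mathbf{F}(A,J)\to\mathcal{L}$, so it suffices to check that they agree on the free generators $\pi_{j}$. Since $\pi_{j}\circ f$ is uniformly continuous, I may write $\pi_{j}\circ f=\hat{s}^{\mathbf{F}(A,I)}(\pi_{i_{1}},\dots,\pi_{i_{n}})$ for some $s$ and indices, and then
\[\phi_{\alpha}(f^{\star}(\pi_{j}))=\phi_{\alpha}(\pi_{j}\circ f)=\hat{s}^{\mathcal{L}}(\alpha(i_{1}),\dots,\alpha(i_{n}))=\overline{\pi_{j}\circ f}^{\mathcal{L}}(\alpha)=\beta(j)=\phi_{\beta}(\pi_{j}).\]

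For part 2, I would reduce the claim to pairs of functions via Theorem 1.2, which says that for $g_{1},g_{2}\in\mathbf{F}(A,J)$ we have $(g_{1},g_{2})\in\ker\phi_{\beta}$ iff $\{y:g_{1}(y)=g_{2}(y)\}\in Z_{\beta}$, and likewise for $\phi_{\alpha},Z_{\alpha}$. Part 1 turns $(g_{1},g_{2})\in\ker\phi_{\beta}$ into $(g_{1}\circ f,g_{2}\circ f)\in\ker\phi_{\alpha}$, and the equality set of the latter pair is precisely $f_{-1}(\{y:g_{1}(y)=g_{2}(y)\})$. So once I can express the given $R$ in the form $\{y:g_{1}(y)=g_{2}(y)\}$ for some $g_{1},g_{2}\in\mathbf{F}(A,J)$, the desired equivalence $R\in Z_{\beta}\Leftrightarrow f_{-1}(R)\in Z_{\alpha}$ drops out.

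The only nontrivial step is therefore to realize each $R\in\{\emptyset\}\cup\bigcup\mathcal{P}(A,J)$ as such an equality locus. A typical block of a partition in $\mathcal{P}(A,J)$ has the shape $R=\{y\in A^{J}:(y(j_{1}),\dots,y(j_{n}))\in S\}$ for some $j_{1},\dots,j_{n}\in J$ and $S\subseteq A^{n}$. Fixing distinct $a,b\in A$ and letting $c:A^{n}\rightarrow A$ take the value $a$ on $S$ and $b$ off $S$, the functions $g_{1}=\hat{c}^{\mathbf{F}(A,J)}(\pi_{j_{1}},\dots,\pi_{j_{n}})$ and $g_{2}=\hat{a}^{\mathbf{F}(A,J)}$ both lie in $\mathbf{F}(A,J)$ and satisfy $\{y:g_{1}(y)=g_{2}(y)\}=R$; the case $R=\emptyset$ is handled by two distinct constants. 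This is the only place I expect to do real work; everything else is formal manipulation of the definitions and of Theorem 1.2.
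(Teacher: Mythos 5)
Your proposal is correct and follows essentially the same route as the paper: part 2 is exactly the paper's argument (realize $R$ as an equality locus $\{y:g_{1}(y)=g_{2}(y)\}$ with $g_{1},g_{2}\in\mathbf{F}(A,J)$, note $f_{-1}(R)$ is the equality locus of $f^{\star}(g_{1}),f^{\star}(g_{2})$, and translate through the kernels using part 1), and your explicit construction of such $g_{1},g_{2}$ supplies a detail the paper merely asserts. For part 1 you repackage the paper's computation more efficiently — the paper expands an arbitrary $\ell=\hat{r}^{\mathbf{F}(A,J)}(\pi_{j_{1}},\dots,\pi_{j_{n}})$ and checks that both $\phi_{\beta}(\ell)$ and $\phi_{\alpha}(f^{\star}(\ell))$ equal $\hat{t}^{\mathcal{L}}(\alpha(i_{1}),\dots,\alpha(i_{m}))$, whereas you observe that $f^{\star}$ is a homomorphism and verify agreement only on the free generators $\pi_{j}$ — but the underlying mechanism (free generation by the projections plus the definition of $\overline{f}^{\mathcal{L}}$) is the same.
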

\begin{proof}
1. Let $\ell\in\mathbf{F}(A,J)$. Then there are $j_{1},\dots,j_{n}\in J$ along with some map
$r:A^{n}\rightarrow A$ such that $\ell=\hat{r}^{\mathbf{F}(A,J)}(\pi_{j_{1}},\dots,\pi_{j_{n}})$.
Since $\pi_{j_{1}}\circ f,\dots,\pi_{j_{n}}\circ f\in F(A,I)$ there are $i_{1},\dots,i_{m}\in I$ and functions
$s_{1},\dots,s_{n}:A^{m}\rightarrow A$ where 
\[\pi_{j_{1}}\circ f=\hat{s_{1}}^{\mathbf{F}(A,I)}(\pi_{i_{1}},\dots,\pi_{i_{m}})
,\dots,\pi_{j_{n}}\circ f=\hat{s_{n}}^{\mathbf{F}(A,I)}(\pi_{i_{1}},\dots,\pi_{i_{m}}).\]

 Let $t:A^{m}\rightarrow A$
be the function where \[t(a_{1},\dots,a_{m})=r(s_{1}(a_{1},\dots,a_{m}),\dots,s_{n}(a_{1},\dots,a_{m})).\]

Then we have \[\phi_{\beta}(\ell)=\phi_{\beta}(\hat{r}^{\mathbf{F}(A,J)}(\pi_{j_{1}},\dots,\pi_{j_{n}}))\]
\[=\hat{r}^{\mathcal{L}}(\beta(j_{1}),\dots,\beta(j_{n}))
=\hat{r}^{\mathcal{L}}(\overline{f}^{\mathcal{L}}(\alpha)(j_{1}),\dots,\overline{f}^{\mathcal{L}}(\alpha)(j_{n}))\]
\[=\hat{r}^{\mathcal{L}}(\overline{\pi_{j_{1}}\circ f}^{\mathcal{L}}(\alpha),\dots,
\overline{\pi_{j_{n}}\circ f}^{\mathcal{L}}(\alpha))\]
\[=\hat{r}^{\mathcal{L}}(\hat{s_{1}}^{\mathcal{L}}(\alpha(i_{1}),\dots,\alpha(i_{m})),\dots,
\hat{s_{n}}^{\mathcal{L}}(\alpha(i_{1}),\dots,\alpha(i_{m})))\]
\[=\hat{t}^{\mathcal{L}}(\alpha(i_{1}),\dots,\alpha(i_{m})).\]

Now assume that $(a_{i})_{i\in I}\in A^{I}$. Then 
\[f^{*}(\ell)(a_{i})_{i\in I}=\ell\circ f(a_{i})_{i\in I}=\hat{r}^{\mathbf{F}(A,J)}(\pi_{j_{1}},\dots,\pi_{j_{n}})(f(a_{i})_{i\in I})\]
\[=r(\pi_{j_{1}}\circ f(a_{i})_{i\in I},\dots,\pi_{j_{n}}\circ f(a_{i})_{i\in I})\]
\[=r(\hat{s_{1}}^{\mathbf{F}(A,I)}(\pi_{i_{1}},\dots,\pi_{i_{m}})(a_{i})_{i\in I},\dots,
\hat{s_{n}}^{\mathbf{F}(A,I)}(\pi_{i_{1}},\dots,\pi_{i_{m}})(a_{i})_{i\in I})\]
\[=r(s_{1}(a_{i_{1}},\dots,a_{i_{m}}),\dots,s_{n}(a_{i_{1}},\dots,a_{i_{m}}))=t(a_{i_{1}},\dots,a_{i_{m}})\]
\[=\hat{t}^{\mathbf{F}(A,I)}(\pi_{i_{1}},\dots\pi_{i_{m}})(a_{i})_{i\in I}.\]

We conclude that \[\phi_{a}(f^{*}(\ell))=\phi_{a}(\hat{t}^{\mathbf{F}(A,I)}(\pi_{i_{1}},\dots\pi_{i_{m}}))=
\hat{t}^{\mathcal{L}}(\alpha(i_{1}),\dots,\alpha(i_{m}))=\phi_{\beta}(\ell).\]

2. Let $\ell_{1},\ell_{2}:A^{J}\rightarrow A$ be two functions such that $\{\mathbf{a}\in A^{J}|\ell_{1}(\mathbf{a})=\ell_{2}(\mathbf{a})\}=R$. Then $\mathbf{a}\in f_{-1}(R)$ if and only if
$f(\mathbf{a})\in R$ if and only if $\ell_{1}\circ f(\mathbf{a})=\ell_{2}\circ f(\mathbf{a})$. Thus
\[f_{-1}(R)=\{\mathbf{a}\in A^{I}|\ell_{1}\circ f(\mathbf{a})=\ell_{2}\circ f(\mathbf{a})\}
=\{\mathbf{a}\in A^{I}|f^{\star}(\ell_{1})(\mathbf{a})=f^{\star}(\ell_{2})(\mathbf{a})\}.\]
Therefore, we have $R\in Z_{\beta}$ if and only if $\phi_{\beta}(\ell_{1})=\phi_{\beta}(\ell_{2})$
if and only if $\phi_{\alpha}f^{*}(\ell_{1})=\phi_{\alpha}f^{*}(\ell_{2})$ if and only if $(f^{\star}(\ell_{1}),f^{\star}(\ell_{2}))\in\ker(\phi_{\alpha})$ if and only if
\[f_{-1}(R)=\{\mathbf{a}\in A^{I}|f^{\star}(\ell_{1})(\mathbf{a})=f^{\star}(\ell_{2})(\mathbf{a})\}\in Z_{\alpha}.\]
\end{proof}

In particular, for $\ell,\mathfrak{m}\in\mathbf{F}(A,J)$, if $\{\mathbf{a}\in A^{J}|\ell(\mathbf{a})=\mathfrak{m}(\mathbf{a})\}\in Z_{\beta}$, then
\[\{\mathbf{a}\in A^{I}|\ell(f(\mathbf{a}))=\mathfrak{m}(f(\mathbf{a}))\}=
f_{-1}(\{\mathbf{a}\in A^{J}|\ell(\mathbf{a})=\mathfrak{m}(\mathbf{a})\})\in Z_{\alpha}.\]
Therefore define a mapping $f^{\beta,\alpha}:\mathbf{F}(A,J)/Z_{\beta}\rightarrow\mathbf{F}(A,I)/Z_{\alpha}$ by
$f^{\beta,\alpha}([\ell])=[\ell\circ f]=[f^{\star}(\ell)]$.
Let $\iota_{\beta,\alpha}:\langle \beta''(J)\rangle\rightarrow\langle \alpha''(I)\rangle$ be the inclusion mapping.

\begin{thm}
We have $\iota_{\beta,\alpha}\iota_{\beta}=\iota_{\alpha}f^{\beta,\alpha}$.

\[\begin{CD}
\mathbf{F}(A,J)/Z_{\beta}    								@>f^{\beta,\alpha}>>    				\mathbf{F}(A,I)/Z_{\alpha}\\
@VV\iota_{\beta} V														    @VV\iota_{\alpha} V\\
\langle\beta''(J)\rangle	 @>\iota_{\beta,\alpha}>>   \langle\alpha''(I)\rangle
\end{CD}\]
\end{thm}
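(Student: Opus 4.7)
The plan is to observe that this is essentially a restatement of part 1 of the preceding theorem, so the proof reduces to unwinding the definitions and applying $\phi_\beta = \phi_\alpha f^\star$. First I would verify that the diagram actually makes sense, i.e.\ that $\iota_{\beta,\alpha}$ is well defined as an inclusion. Since $\beta=\overline{f}^{\mathcal{L}}(\alpha)$, for each $j\in J$ we have
\[\beta(j)=\pi_{j}(\overline{f}^{\mathcal{L}}(\alpha))=\overline{\pi_{j}\circ f}^{\mathcal{L}}(\alpha),\]
and since $\pi_{j}\circ f$ is a term in the generators $\pi_{i_{1}},\dots,\pi_{i_{m}}$ of $\mathbf{F}(A,I)$, the element $\beta(j)$ lies in $\langle\alpha''(I)\rangle$. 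Hence $\beta''(J)\subseteq\langle\alpha''(I)\rangle$, and so $\langle\beta''(J)\rangle\subseteq\langle\alpha''(I)\rangle$, making the inclusion $\iota_{\beta,\alpha}$ legitimate.

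Next I would fix an arbitrary $[\ell]\in\mathbf{F}(A,J)/Z_{\beta}$ and chase it around the square. Going down and then across gives
\[\iota_{\beta,\alpha}(\iota_{\beta}([\ell]))=\iota_{\beta,\alpha}(\phi_{\beta}(\ell))=\phi_{\beta}(\ell),\]
using that $\iota_{\beta,\alpha}$ is just the inclusion map. Going across and then down gives
\[\iota_{\alpha}(f^{\beta,\alpha}([\ell]))=\iota_{\alpha}([f^{\star}(\ell)])=\phi_{\alpha}(f^{\star}(\ell)).\]
So the asserted equality $\iota_{\beta,\alpha}\iota_{\beta}=\iota_{\alpha}f^{\beta,\alpha}$ on the class $[\ell]$ reduces to the identity $\phi_{\beta}(\ell)=\phi_{\alpha}(f^{\star}(\ell))$, which is precisely part 1 of the preceding theorem applied pointwise.

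There is essentially no obstacle here; the only thing one might worry about is well-definedness of $f^{\beta,\alpha}$ itself, but that was already verified just before the statement (from $R\in Z_{\beta}\Rightarrow f_{-1}(R)\in Z_{\alpha}$, part 2 of the preceding theorem). Thus the proof amounts to: (i) confirm the inclusion is well posed, (ii) expand both composites using the definitions of $\iota_{\alpha},\iota_{\beta},\iota_{\beta,\alpha},f^{\beta,\alpha}$, and (iii) invoke $\phi_{\beta}=\phi_{\alpha}f^{\star}$. No further calculation is needed.
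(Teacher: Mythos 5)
Your proposal is correct and follows essentially the same argument as the paper's proof: fix $[\ell]$, expand both composites using $\iota_{\beta,\alpha}$ being an inclusion and the definitions of $\iota_{\alpha},\iota_{\beta},f^{\beta,\alpha}$, and reduce to the identity $\phi_{\beta}=\phi_{\alpha}f^{\star}$ from part 1 of the preceding theorem. The extra remarks on well-posedness of the inclusion and of $f^{\beta,\alpha}$ are sound but already handled in the surrounding text.
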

\begin{proof}
Let $\ell\in\mathbf{F}(A,J)$. Then $\iota_{\beta,\alpha}\iota_{\beta}[\ell]
=\iota_{\beta}[\ell]=\phi_{\beta}(\ell)=\phi_{\alpha}(f^{\star}(\ell))=\iota_{\alpha}[f^{\star}(\ell)]=
\iota_{\alpha}f^{\beta,\alpha}[\ell]$. Therefore $\iota_{\beta,\alpha}\iota_{\beta}=\iota_{\alpha}f^{\beta,\alpha}$.
\end{proof}
Since $\iota_{\alpha}$ is bijective, we have $f^{\beta,\alpha}
=\iota_{\alpha}^{-1}\iota_{\beta,\alpha}\iota_{\beta}$, and in particular, the function
$f^{\beta,\alpha}$ does not depend on $f$. We shall therefore write $E^{\beta,\alpha}$ for the mapping
$f^{\beta,\alpha}=\iota_{\alpha}^{-1}\iota_{\beta,\alpha}\iota_{\beta}$.

If $\alpha:I\rightarrow\mathcal{L},\beta:J\rightarrow\mathcal{L}$, then we shall write
$\beta\leq\alpha$ if $\langle\beta''(J)\rangle\subseteq\langle\alpha''(I)\rangle$. Clearly, the relation $\leq$
is a preordering on the class of all functions with range $\mathcal{L}$.
One can clearly see that $\beta\leq\alpha$ if and only if for each $j\in J$ there is a $f:A^{n}\rightarrow A$ and
$i_{1},\dots,i_{n}\in I$ such that $\beta(j)=\hat{f}^{\mathcal{L}}(\alpha(i_{1}),\dots,\alpha(i_{n}))$.
Furthermore, using theorem $\ref{ProdPartUnif}$,
one may show that $\beta\leq\alpha$ if and only if there is a uniformly continuous mapping
$f:A^{I}\rightarrow A^{J}$ such that $\beta=\overline{f}^{\mathcal{L}}(\alpha)$.

Assume $D$ is a directed set, and for $d\in D$ there is a set $I_{d}$ and a function
$\alpha_{d}:I_{d}\rightarrow\mathcal{L}$, and also assume $\alpha_{d}\leq\alpha_{e}$ whenever $d\leq e$, and
that $\mathcal{L}=\bigcup_{d\in D}\langle\alpha_{d}''(I_{d})\rangle$. Then we have
$\mathcal{L}=^{Lim}_{\longrightarrow}(\langle\alpha_{d}''(I_{d})\rangle,\iota_{\alpha_{d},\alpha_{e}})_{d\leq e}.$ However,
since each $\iota_{\alpha_{d}}:\Omega(A)^{\mathcal{P}(A,I_{d})}/Z_{d}\rightarrow\langle\alpha_{d}''(I_{d})\rangle$ is an isomorphism
and $\iota_{\alpha_{d},\alpha_{e}}\iota_{\alpha_{d}}=\iota_{\alpha_{e}}E^{\alpha_{d},\alpha_{e}}$, we have
\[\mathcal{L}\simeq^{Lim}_{\longrightarrow}(\mathbf{F}(A,I_{d})/Z_{d},E_{d,e})_{d\leq e}.\]

In fact, if we can find a directed system of mappings
$(f_{d,e})_{d,e\in D,d\leq e}$ such that $\overline{f}_{d,e}(\alpha_{e})=\alpha_{d}$ whenever $d\leq e$,
then we can represent $\mathcal{L}$ as a generalization of the Boolean reduced power construction called a Boolean partition algebra reduced power.

\bibliographystyle{amsplain}

\end{document}